\documentclass[12pt, reqno]{amsart}
\usepackage{amssymb}

\usepackage{hyperref}
\hypersetup{colorlinks=true,linkcolor=black,anchorcolor=black,citecolor=black}
\usepackage{tikz-cd}

\usepackage{xcolor}
\usepackage{longtable}

\usepackage{tikz}
\usepackage[all]{xy}

\usepackage{booktabs}
\usepackage{multirow}

\newcommand{\qs}[2]{\tfrac{1}{#1}(#2)}

\DeclareFontFamily{U}{mathb}{\hyphenchar\font45}
\DeclareFontShape{U}{mathb}{m}{n}{
      <5> <6> <7> <8> <9> <10> gen * mathb
      <10.95> mathb10 <12> <14.4> <17.28> <20.74> <24.88> mathb12
      }{}
\DeclareSymbolFont{mathb}{U}{mathb}{m}{n}
\DeclareMathSymbol{\righttoleftarrow}{3}{mathb}{"FD}

\theoremstyle{plain}
\newtheorem{prop}{Proposition}[]
\newtheorem{theo}[prop]{Theorem}
\newtheorem{conj}[prop]{Conjecture}

\theoremstyle{remark}

\theoremstyle{definition}
\newtheorem{rema}[prop]{Remark}

\newtheorem{exam}[prop]{Example}
\numberwithin{equation}{section}

\def\bP{{\mathbb P}}
\def\bQ{{\mathbb Q}}

\title[Potential density of rational points]{Potential density of rational points\\ on sextic double solids}

\author[Ivan Cheltsov]{Ivan Cheltsov}
\address{\emph{Ivan Cheltsov}
\newline 
University of Edinburgh, Scotland}
\email{I.Cheltsov@ed.ac.uk}

\author[Yuri Tschinkel]{Yuri Tschinkel}
\address{\emph{Yuri Tschinkel}
\newline
  Courant Institute \& Simons Foundation, New York, USA}
\email{tschinkel@cims.nyu.edu}

\date{\today}

\makeatletter
\@namedef{subjclassname@2020}{\textup{2020} Mathematics Subject Classification}
\makeatother

\subjclass[2020]{14E08;	14E05, 14M20}

\begin{document}

\begin{abstract}
We provide examples of smooth sextic double solids with potentially dense sets of rational points, over any field of characteristic zero. 
\end{abstract}
	
	\maketitle

\section{Introduction}

Let $k$ be a field of characteristic zero and $X$ a variety over $k$. Following \cite{BT-elliptic}, we say that $X$ satisfies {\em potential density} if there exists a finite extension $k'/k$ such that $k'$-rational points $X(k')$ are Zariski dense. Of particular interest are number fields, or function fields of varieties over algebraically closed ground fields.  

One can ask for geometric conditions ensuring potential density, see e.g.,  
\cite[Conjecture 1.3]{Harris-T}, or the more general framework of \cite{Campana}. 
A special case of these considerations is:

\begin{conj}
\label{conj:main}
Let $X$ be a smooth projective variety over $k$ with ample anticanonical class. 
Then rational points on $X$ are potentially dense. 
\end{conj}

The conjecture is obviously true in dimensions $\le 2$ -- all such varieties are geometrically rational. It is also true for geometrically unirational varieties. 
By the celebrated theorem of Graber--Harris--Starr \cite{GHS}, and its extension to positive characteristic by de Jong--Starr \cite{DS}, the conjecture holds over function fields of curves, over algebraically closed fields. However, it remained notoriously difficult over number fields, in situations when unirationality is unknown, in particular, for smooth Fano threefolds of the following families:
\begin{itemize}
    \item quartic 3-folds,
    \item sextic double solids, and 
    \item Veronese double cones. 
\end{itemize}
Geometric unirationality of {\em some} smooth quartic 3-folds was known classically, going back to Fano. Examples of unirational smooth Veronese double cones were recently produced by \cite{masa}. No unirational examples of smooth sextic double solids are known, at present. 

On the other hand, potential density was settled in the positive for quartics and Veronese double cones in \cite{Harris-T} and \cite{BT-elliptic}, respectively. {\em Singular} sextic double solids were treated in \cite{CP}. The key input in those approaches was existence of {\em birational} maps to elliptic fibrations over rational bases, with 
{\em nontorsion} (or {\em saliently ramified} in the terminology of \cite{BT-elliptic}) multisections with potentially dense sets of rational points; this allowed one to propagate rational points to a Zariski dense subset of the threefold. This technique has been widely applied since its introduction, for example, to show potential density of rational points on elliptic K3 surfaces \cite{BT-K3}, or more recently, 
on Calabi--Yau threefolds with fibrations in abelian surfaces \cite{B-Sho}, or elliptic curves  
\cite{Suzuki-2022}.

However, smooth sextic double solids are {\em never} birational to elliptic or abelian fibrations \cite{CP2010,CP-2007}, in characteristic $\neq 5$. This was perceived as the main obstacle to potential density of rational points on these varieties. Alternative (unsuccessful) approaches to potential density were proposed in \cite{HT-symm}, where it was proved that the symmetric square of a sextic double solid has potentially dense rational points.  

In characteristic 5, there are examples of smooth sextic double solids that are birational to an elliptic fibration, discovered in \cite{CP-2007}, see Example~\ref{exam:char5}. 
Surprisingly, a version of this construction can be made to work in characteristic zero to produce {\em dominant} rational maps from an elliptically fibered threefold to a smooth sextic double solid, see Section~\ref{sect:construct} for more details. This allows us to prove the following result.

\begin{theo}
\label{thm:main}
There exists a codimension-3 (in moduli) family of smooth sextic double solids with potentially dense sets of rational points, over any field of characteristic zero.  
\end{theo}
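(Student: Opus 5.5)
We will construct, for a suitable family of sextic double solids $X\to\bP^3$ branched in a smooth sextic $S\subset\bP^3$, a threefold $Y$ with an elliptic fibration $Y\to B$ over a rational surface $B$ together with a dominant rational map $Y\dashrightarrow X$. Potential density on $Y$ then implies potential density on $X$, since the image of a Zariski dense set under a dominant map is Zariski dense. The model is the characteristic-$5$ construction of \cite{CP-2007} (Example~\ref{exam:char5}). There one uses a special configuration of $S$, for instance a sextic tangent to a family of curves or containing special lines, so that the preimages of a two-parameter family of curves in $\bP^3$ become curves of genus one on $X$. In characteristic zero we relax birationality: take a two-dimensional family of rational curves $C_b\subset\bP^3$, $b\in B$, for example lines through a point or conics in a net of planes, chosen so that $S$ meets each $C_b$ with high contact orders. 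The double cover of $C_b$ branched only at the points of odd intersection multiplicity is then a genus-one curve. These curves fill out $X$ with finitely many through a general point, which yields $Y\to B$ and a generically finite dominant map $Y\dashrightarrow X$. The tangency conditions should cut out a codimension-$3$ locus in the moduli of sextics, and we will verify by a dimension count and an explicit example (Bertini plus a smoothness check at one member) that the general member has smooth $S$.

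Next we apply the Bogomolov--Tschinkel propagation method \cite{BT-elliptic,Harris-T}. We need a multisection $M\subset Y$ of $Y\to B$ with two properties: $M$ has potentially dense rational points (for example, $M$ is rational or itself an elliptic fibration with a section of infinite order), and $M$ is nontorsion, i.e.\ saliently ramified. A natural candidate is the preimage of a ramification or special divisor, such as the locus over the points where $C_b$ touches $S$, or the exceptional divisor arising from the base point of the family. Given such $M$, after a finite extension $k'$ the points of $M(k')$ are dense. For each such point $m$ over a general $b$, the divisor $m-m'$, with $m'$ a second point of $M$ on the fibre or $m$ compared with a chosen origin, is of infinite order in $\mathrm{Pic}^0(Y_b)$. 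Its multiples therefore give infinitely many $k'$-points on $Y_b$. Taking the union over a dense set of $b$ gives density on $Y$, and hence on $X$.

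The main obstacle will be the nontorsion property of $M$. One must show that the relevant section of the Jacobian fibration is not torsion. We would do this by exhibiting a fibre, or a degeneration (for instance where $C_b$ becomes tangent to $S$ to even higher order, giving a nodal fibre), on which the specialized point is visibly nontorsion. Alternatively, we would check salient ramification, meaning that $M\to B$ ramifies along a divisor transverse to the discriminant of the fibration. A second delicate point is keeping $S$ smooth while imposing enough tangency to make the fibres genus one, and making sure the construction does not accidentally become birational. The latter would contradict \cite{CP2010,CP-2007}, so the cover $Y\dashrightarrow X$ must have degree at least $2$. Once these are settled, the argument works uniformly over any field of characteristic zero, because all constructions are defined over $\bQ$ and density is preserved under base extension.
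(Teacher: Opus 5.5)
There is a genuine gap. The core of the argument is never pinned down: you do not say which two-parameter family of genus-one curves to use, nor which codimension-$3$ condition on $S$ gives its parameter space and its multisection potentially dense points. The concrete families you suggest do not work.

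\begin{itemize}
\item \emph{Lines through a fixed point $P$.} For smooth $S$, the lines through $P$ that are tangent to $S$ form only a curve in the $\mathbb{P}^2$ of lines through $P$ (the branch curve of the projection of $S$ from $P$). A general line through $P$ meets $S$ in six distinct points, so its preimage in $X$ has genus $2$. These genus-one curves therefore do not fill $X$.
\item \emph{Conics in a net of planes.} Each member would need four tangencies. Nothing in your outline produces a rational parameter space, a multisection with dense points, or any reason for $S$ to be special.
\end{itemize}

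Correspondingly, the claim that ``the tangency conditions should cut out a codimension-$3$ locus'' has no mechanism behind it.

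The missing idea is to require that $S$ contain a curve $C$ of geometric genus $\le 1$, most simply a line. Containing a line is $7$ conditions on a sextic, minus $\dim\operatorname{Gr}(2,4)=4$; that is where codimension $3$ comes from. One then uses the lines $\ell$ through $p\in C$ lying in $T_pS$.
\begin{itemize}
\item Each such $\ell$ is tangent to $S$ at $p$ and meets $S$ in four further points, so the normalization of its preimage is a genus-one curve.
\item The family is parametrized by $\mathbb{P}(T_S|_C)$, which is a rational surface.
\item A general point of $\mathbb{P}^3$ lies on exactly five such lines, because the plane through it and $C$ is tangent to $S$ at the five points of $C\cap Q_\tau$. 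This gives a dominant degree-$5$ map to $X$.
\end{itemize}
This also explains why $S$ must be special. Tangent lines along any curve $\Gamma\subset S$ (for example a plane section) give the same elliptic structure, but over a surface ruled over $\Gamma$. A very general sextic contains no curves of geometric genus $\le 1$, and over a base curve of genus $\ge 2$ Faltings' theorem forces rational points of the base onto finitely many rulings.

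Your guess for the multisection is the right one. The two points over $p$ form a bisection. It becomes a pair of sections after the double cover $M_C\to\mathbb{P}(T_S|_C)$, which is a conic bundle over $C$ and is rational after a finite extension.

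Nontorsion comes from a line with contact order exactly $3$ at $p$ and three further distinct intersection points. There the two points over $p$ coincide, while the fibre (branched at four distinct points) is still a smooth genus-one curve: this is salient ramification in the sense of \cite{BT-elliptic}. So contact order $3$ does not produce a nodal fibre as you suggest; the fibre only degenerates at contact order $\ge 4$.
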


The proof of this theorem, given in Section~\ref{sect:construct}, is algorithmic and can be used to produce explicit examples. 

\begin{exam}
Let $X\subset\bP(1,1,1,1,3)$ be the sextic double solid given by
$$
x_5^2=x_1^6+x_1x_2^5+x_3^6-x_4^6.
$$
Then $X(\bQ)$ is Zariski dense. 
\end{exam}

In \cite{Fletcher,Reid}, Fletcher and Reid discovered $95$ deformation families of well-formed quasismooth Fano threefold hypersurfaces with terminal singularities in weighted projective spaces. Sextic double solids form one such family (Family No. 3 in the notation of \cite{Fletcher}).  In Section~\ref{sect:95}, we discuss potential density of rational points on Fano 3-folds in these $95$ families.

\

\noindent
{\bf AI disclosure:} We used ChatGPT-6 Astra for producing examples, and proofreading. The paper was written by the authors, who are responsible for its correctness.

\

\noindent
{\bf Acknowledgments:} 
We are grateful to Zhijia Zhang for his interest and collaboration on related problems.
Ivan Cheltsov has been supported by the Simons Collaboration grant \emph{Moduli of varieties}.

\section{The construction}
\label{sect:construct}

We proceed to explain the geometric construction. 
Let 
$$
X:=\{ x_5^2=f_6(x_1,x_2,x_3,x_4) \} \subset \bP(1,1,1,1,3)
$$
be a sextic double solid, i.e., 
$$
\pi: X\to \bP^3\supset S
$$ 
is double cover ramified in a sextic surface $S$, which we assume to be smooth. The idea of the proof   is to produce a 
diagram

 \ 
 
\centerline{
\xymatrix{
\mathcal E \ar@{-->}[r]^{\phi} \ar[d]_{\psi} & X \\
B & 
}
}

\noindent
where 
\begin{itemize}
    \item $\phi$ is a dominant rational map, and 
    \item $\psi$ is an elliptic fibration over a surface, with a {\em saliently ramified} multisection, which has a potentially dense set of  rational points. 
\end{itemize}
Our main observation is that this can be constructed from a curve $C\subset S$ 
whose normalization has genus $\le 1$, e.g., a line in $S$. In hindsight, an avatar of our construction is already present in the following:

\begin{exam}[{\cite[Example~1.5]{CP-2007}}]
\label{exam:char5}
Let $k=\mathbb{F}_5$.
Let $\pi\colon X\to\mathbb{P}^3$ be the double cover ramified along the sextic surface $S$ given by 
\begin{multline*}
x_1^5x_2+x_1^4x_2^2+x_1^2x_2^3x_3-x_2^5x_3-2x_1^4x_3^2+x_1x_3^5+x_2x_3^5\\
+x_1^3x_2^2x_4+2x_1^2x_2^3x_4-x_1x_2x_3^3x_4-x_1x_2x_3^2x_4^2-x_1^2x_2x_4^3\\
+x_1x_2^2x_4^3+x_1^2x_3x_4^3+x_1x_2x_4^4+x_1x_4^5+2x_2x_4^5=0.
\end{multline*}
Then $X$ is nonsingular, and $S$ contains the line $C=\{x_1=x_2=0\}$. 
For sufficiently general $p\in X$, there is a unique hyperplane
$H_p\subset\mathbb{P}^3$ containing $\pi(p)$ and $C$.
The residual quintic curve $Q_p\subset H_p$ given by
$$
S\cap H_p=C\cup Q_p
$$ 
intersects $C$ at a single point $q$ with $\operatorname{ord}_{q}(Q_p|_C)=5$.
The points $\pi(p)$ and $q$ determine a line $L_p\subset \mathbb{P}^3$.
Define a rational map
$$
\Xi_C\colon X\dashrightarrow\operatorname{Gr}(2,4)
$$
by setting $\Xi_C(p)=L_p$.
The image of $\Xi_C$ is isomorphic to $\mathbb{P}^2$,
hence we may assume that $\Xi_C$ is a rational map of $X$ onto $\mathbb{P}^2$.
The resolution of indeterminacy of $\Xi_C$ birationally transforms $X$ into an elliptic fibration.
\end{exam}

Using the approach in this example together with techniques developed in \cite{BT-elliptic}, we obtain:

\begin{theo}
\label{thm:mainn}
Assume that $S$ contains a geometrically integral curve $C$ of degree $d$
such that:
\begin{enumerate} 
    \item the normalization $\tilde C$ of $C$ has genus at most one,
    \item there is a smooth point $p\in C$ such that the curve $T_p(S)\cap S$ has an ordinary double singularity (node) at $p$, where $T_p(S)\subset\mathbb{P}^3$ is the embedded tangent space; 
    \item there exists a line $\ell\subset\mathbb{P}^3$ such that $\ell$ 
    meets $S$ at $p$ with multiplicity exactly 3
    (triple tangency condition), and $\ell\cap S$ consists of the point $p$ (counted with multiplicity $3$) and three other, pairwise distinct, intersection points. 
\end{enumerate} 
Then $X$ has potentially dense rational points.
\end{theo}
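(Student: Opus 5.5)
We will build the diagram of Section~\ref{sect:construct} from the curve $C$, using the line-construction of Example~\ref{exam:char5} with $p$ moving along $C$. For a point $c\in C$ and a point $x\in\mathbb{P}^3$, let $L_{c,x}$ be the line through $c$ and $x$. We restrict to lines tangent to $S$ at a point of $C$. Such a line meets $S$ in $c$ with multiplicity $\ge 2$ and in four further points. Its preimage in $X$ is then a double cover of $\mathbb{P}^1$ branched in the four residual points, hence a curve of genus one.

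Concretely, let $\mathcal F\to \tilde C\times\mathbb{P}^1$ be the family of lines through $c\in C$ lying in $T_c(S)$. It is a $\mathbb{P}^1$-bundle over $\tilde C$, so $B$ is a ruled surface over $\tilde C$. Let $\mathcal E\to B$ be the family of double covers $\pi^{-1}(L)$ for $L\in B$, normalized. Then $\psi\colon\mathcal E\to B$ is a genus-one fibration.

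The natural map $\phi\colon\mathcal E\dashrightarrow X$ is dominant by a dimension count. The lines tangent to $S$ at points of $C$ form a $2$-dimensional family, and the union of these lines is a surface in $\mathbb{P}^3$, not all of $\mathbb{P}^3$. So I would instead take $B$ to be the $3$-dimensional family of lines through points of $C$, without the tangency condition, and use a $\mathbb{P}^1$-parametrized family of them.

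That alone does not give genus one. The remedy is to pass to a double cover of $B$ along which the preimage of $c$ splits. Over $L\ni c$, the curve $\pi^{-1}(L)$ is a double cover branched in the $6$ points of $L\cap S$, one of which is $c$. Hence $\pi^{-1}(L)$ has genus two, and its Jacobian is a family of abelian surfaces. So I would fall back to the tangent-line family, whose total space is $3$-dimensional: it has $2$ parameters for $L$ plus $1$ along the fiber. Its image under $\phi$ is $\pi^{-1}$ of the union of tangent lines at points of $C$. That union is the tangent developable-type surface, which is $2$-dimensional, so $\phi$ is not dominant.

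To get dominance I would therefore use the lines through $c$ lying in $T_c(S)$, which form a $1$-parameter family for each $c$ and hence $2$ parameters in total. Each such line meets $S$ at $c$ to order $\ge2$. The preimage of such a line has genus one, but these lines again sweep only the union of the planes $T_c(S)$, $c\in C$. That is a $1$-parameter family of planes, whose union is $3$-dimensional, and this is fine: through a general $x\in\mathbb{P}^3$ there passes a plane $T_c(S)$, and the line $cx$ lies in it. So $B=\{(c,L): c\in C,\ c\in L\subset T_c(S)\}$ is a surface, $\psi$ is an elliptic fibration, and $\phi$ is dominant and generically finite.

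Rational points come next. A section-like multisection is given by the point $c$ itself, $M=\{\pi^{-1}(c)\}$, which is a double cover of $B$ ramified along the lifts of $C$. The conditions (2) and (3) enter here. At the special point $p$, the node of $T_p(S)\cap S$ and the triple tangency line $\ell$ produce a fiber over a point $b_0=(p,\ell)\in B$. In that fiber the multisection $M$ meets a point with ramification. The fiber $\pi^{-1}(\ell)$ is a nodal genus-one curve: it is branched in the three residual distinct points, and the triple contact at $p$ produces a node. So this is saliently ramified in the sense of \cite{BT-elliptic}. That shows $M$ is nontorsion in the Mordell--Weil group of the generic fiber.

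Since $\tilde C$ has genus $\le1$, the base $B$, and then $M$, which is ruled over a double cover of $\tilde C$, have potentially dense points after a finite extension. For this I would choose $M$ to be birational to an elliptic surface or a ruled surface over a curve of genus $\le1$. The fiberwise translation argument of \cite{BT-elliptic} then gives density in $\mathcal E$, hence in $X$ via the dominant map $\phi$.

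The main obstacle is verifying salient ramification and nontorsion from conditions (2) and (3). This requires a careful local analysis of the degenerate fiber over $(p,\ell)$, and a check that $M$ itself has dense points, since it is a double cover of $B$ whose branch locus must be controlled.
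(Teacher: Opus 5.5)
Your final set-up (namely $B=\{(c,L): c\in L\subset T_c(S)\}$, the family $\mathcal E\to B$ of normalized preimages, and the bisection $M$ formed by the points over the tangency point) is the paper's. Using $M$ directly as a multisection in \cite{BT-elliptic} is equivalent to the paper's passage to the double cover $M_C\to B_C$ with two sections. However, the two verifications that actually prove the theorem are wrong or missing.

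\emph{First, the special fiber.} Note that $M$ lives on the normalized fibers, not on $\pi^{-1}(c)$, which is a single point of $X$. Let $t$ be a coordinate on $L$ vanishing at $c$. A general $(c,L)$ gives $f_6|_L=t^2g(t)$ with $g(0)\neq 0$, and the normalization $v^2=g(t)$, $v=w/t$, has the two points $v=\pm\sqrt{g(0)}$ over $c$. For $(p,\ell)$ you get $w^2=t^3h(t)$, with $h$ having three distinct nonzero roots. This is a \emph{cusp}, not a node; also, no double cover of $\mathbb{P}^1$ is branched in exactly three points. Its normalization $v^2=t\,h(t)$ is branched at $p$ and the three residual points, so it is a \emph{smooth} genus-one curve with a \emph{single} point over $p$. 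Thus the two points of $M$ collide there: $M\to B$ ramifies at a point of a smooth fiber, which is salient ramification. Nontorsion follows because a torsion section meeting the zero section over a smooth fiber vanishes identically. Your nodal picture would give two distinct points over $p$, hence no ramification, and ramification in a nodal fiber would not be salient anyway. Condition (2) makes the second fundamental form at $p$ nondegenerate, so $\ell$ is one of two distinct (simple) asymptotic directions. Hence the branch curve of $M\to B$ is smooth at $(p,\ell)$ and $M$ is smooth there.

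\emph{Second, the potential density of $M$.} You leave this open and describe $M$ as ruled over a double cover of $\tilde C$; such a cover may have genus $\ge 2$, which would kill the argument. The computation above shows that $M\to B$ is branched exactly along the curve of asymptotic directions $\{(c,L): L \text{ meets } S \text{ at } c \text{ with multiplicity} \ge 3\}$. This curve meets the general fiber $\mathbb{P}(T_cS)\cong\mathbb{P}^1$ of $B\to\tilde C$ in the two zeros of the second fundamental form. So the general fiber of $M\to\tilde C$ is a double cover of $\mathbb{P}^1$ branched in two points, i.e.\ a conic, and $M\to\tilde C$ is a conic bundle, ruled over $\tilde C$ itself. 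By Tsen's theorem the generic conic acquires a point over $k'(\tilde C)$ for some finite $k'/k$. Hence $M$ is birational to $\tilde C\times\mathbb{P}^1$ over $k'$, and $g(\tilde C)\le 1$ gives potential density. This is the missing idea.

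Finally, discard your third and fourth paragraphs. ``Lines tangent to $S$ at points of $C$'' and ``lines through $c$ in $T_c(S)$'' are the same family. As you eventually note, it sweeps out $\mathbb{P}^3$: the planes through a general point form a plane in the dual space, which meets the curve $\{T_c(S)\}_{c\in C}$.
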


\begin{proof}
First, we explain how a curve $C\subset S$ gives rise to an elliptic threefold dominating $X$. To begin with, assume that $C$ is a line, since the proof in this case is inspired by Example~\ref{exam:char5}. 

If $C$ is a line, the planes $\Pi_\tau\subset\mathbb{P}^3$ containing $C$ form a pencil with parameter $\tau\in k\cup\infty$. Taking preimages of these planes on $X$, we obtain the pencil of K3 surfaces $Y_\tau\subset X$. For general $\tau$, we have 
$$
S\cap\Pi_\tau=C+Q_\tau,
$$ 
where $Q_\tau$ is a smooth plane quintic meeting $C$ transversely at five points. 
Thus, $Y_\tau$ has five ordinary double points for general $\tau$ (by Bertini theorem), 
and its minimal resolution is a smooth K3 surface.
Resolving the base locus of the pencil gives a K3 fibration, and we denote by $Y_{k(\tau)}$ its generic fiber. 
Then $Y_{k(\tau)}$ is a smooth K3 surface defined over $k(\tau)$, which is a double cover of $\mathbb{P}^2$ ramified over the singular sextic curve $C+Q_\tau$, where we consider $C$ and $Q_\tau$ as plane curves defined over $k(\tau)$.

If one of the intersection points $C\cap Q_\tau$ were defined over $k(\tau)$, the strict transforms of the lines in $\mathbb{P}^2$ passing through this point would give an elliptic fibration on $Y_{k(\tau)}$. But this is not the case (cf. Example~\ref{exam:char5}). However, we can consider a degree 5 field extension $k(\tau)\subset k(C)$, so that the resulting K3 surface $Y_{k(C)}$ has this property --- it is elliptic over $k(C)$. In the geometric setting, this gives a dominant degree-$5$ map from an elliptic threefold (with rational base) to our sextic double solid $X$.

This construction can be extended to the case $d=\mathrm{deg}(C)>1$ as follows. A general line $\ell\subset\mathbb P^3$ meets $S$ at six distinct points, and its preimage in $X$ is a genus-two curve. If $\ell$ is tangent to $S$, then its preimage is a singular curve, and its normalization is an genus-one curve, in general. Using $C$, we can obtain a two-dimensional family of such lines: first we choose a point $p\in C$, and then we choose a line $\ell$ through $p$ in the embedded tangent plane $T_p(S)$. Then the normalized preimages of these lines in $X$ form a family of genus-one curves over a surface $B_C$, which is birational to $C\times\mathbb{P}^1$. Namely, we have 
$$
B_C=\{(p,\ell)\ :\ p\in C,\ p\in\ell\subset T_p(S)\},
$$
so $B_C=\mathbb P(T_S|_C)$, where projectivization parametrizes one-dimensional tangent directions.

Letting $E_{p,\ell}$ be the normalization of the preimage of the line $\ell$ in $X$, we obtain a family
$W_C\to B_C$ whose general fibers are genus one curves~$E_{p,\ell}$. The threefold $W_C$ is equipped with a natural rational map  $W_C\dashrightarrow X$, which is dominant and has degree $5d$, where the factor $5$ comes from the Gauss map of the surface $S$. 

Since there are two points in $E_{p,\ell}$ above the tangency point $p$, we obtain a double cover $M_C\to B_C$ such that the lifted family of genus one curves admits two sections. If $C$ is a line, then $M_C$ is birational to $C\times\mathbb P^1$. In general, the generic fiber of the induced map $M_C\dasharrow C$ is a conic over $k(C)$, so, by Tsen's theorem, $M_C$ is birational to the product $C\times\mathbb P^1$ over a finite extension of the base field $k$. In particular, if the curve $C$ is rational, then the surface $M_C$ is geometrically rational. 

Pulling back the family $W_C\to B_C$ via the double cover $M_C\to B_C$, we obtain a new family $Y_C\to M_C$ of genus one curves, which admits two sections. Summarizing, we obtain the following commutative diagram:
$$
\begin{tikzcd}[column sep=large,row sep=large]
Y_C \arrow[r,"2"] \arrow[d] &
W_C \arrow[r,dashed,"5d"] \arrow[d,"\psi"] & X\\
M_C \arrow[r,"2"'] & B_C &
\end{tikzcd}
$$
where the labels are just generic degrees. Consequently, the degree of the constructed dominant map 
$Y_C\dashrightarrow X$ is $10d$.

By construction, the family of genus one curves $Y_C\to M_C$ has two sections, and the rational points on $M_C$ are potentially dense. The triple tangent condition ensures that the difference of these sections gives a nontorsion divisor on the generic fiber of the family. Finally, using \cite{BT-elliptic} (see also \cite[Theorem~3.2]{Suzuki-2022}), 
we conclude that rational points on $Y_C$ are potentially dense, so they are potentially dense on $X$ as well.
\end{proof}

\begin{rema}
The moduli space of sextic double solids is $68$-dimensional. Counting parameters, one can compute the codimension in moduli imposed by the condition that the surface $S$ contains a curve $C$ of small degree. The following table summarizes the outcome in low-degree cases. 

\ 

\begin{center}\small
\begin{tabular}{|@{}|c|c|c@{}|}
\hline
Curve \(C\)&\(\mathrm{deg}(C)\)&Codimension in moduli\ $\,$\\
\hline
\hline
Line&1&3\\
\hline
Smooth conic&2&5\\
\hline
Twisted cubic&3&7\\
\hline
\ Smooth plane cubic&3&6\\
\hline
\ Smooth elliptic quartic&4&8\\
\hline
\end{tabular}
\end{center}
\end{rema}

\section{Ninety-five}
\label{sect:95}

The natural generalization of a smooth sextic double solid is a well-formed quasismooth Fano threefold hypersurface in weighted projective space
\begin{equation} 
\label{eqn:xd}
X_d\subset\mathbb P(a_1,a_2,a_3,a_4,a_5)
\end{equation}
such that $X_d$ has at most terminal singularities (so $X_d$ is a Mori fiber space over a point) and has Fano index $1$, i.e., 
$$
d=\sum_{i=1}^{5}a_i-1.
$$
These have been classified by Fletcher and Reid into $95$ deformation families. In that classification scheme,  No. 1 is the family of quartic threefolds 
$$
X_4\subset \bP^4=\bP(1,1,1,1,1),
$$
Family No. 2 consists of hypersurfaces of degree $5$ in $\mathbb{P}(1,1,1,1,2)$, and the sextic double solids form Family No. 3. For explicit description of the remaining families, we refer the reader to, e.g., \cite{AP-Fano} and \cite{ACP-2021}. 

Every quasismooth member of these 95 families is birationally rigid \cite{CPR,CP-95}. Potential density of rational points on these hypersurfaces has been studied in \cite{CP}, again using birationality to elliptically fibered threefolds. We extend these results as follows: 

\begin{theo}
\label{theo:93}
For $93$ out of $95$ deformation families of well-formed quasismooth weighted Fano threefold hypersurfaces there exist 
well-formed quasismooth members over $\mathbb Q$ with potentially dense sets of rational points. The exceptional families are:
\begin{enumerate}
\item Family No.~$75$: $X_{30}\subset\mathbb P(1,4,5,6,15)$;
\item Family No.~$93$: $X_{50}\subset\mathbb P(1,7,8,10,25)$.
\end{enumerate}
\end{theo}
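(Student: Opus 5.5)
We would split the $95$ families into three groups and treat each group separately.

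\emph{Group 1.} Families where potential density is already known for quasismooth members. Here we cite \cite{CP}. That paper proves potential density for general members of all families except a short list, using birational maps to elliptic fibrations over rational bases with nontorsion multisections; see also \cite{BT-elliptic}. The first step is to go through the tables of \cite{CP} and \cite{CPR,CP-95}. For each family we record whether the birational models found there (elliptic fibrations, or K3 fibrations admitting elliptic structures) come with a saliently ramified multisection. We also check that these structures are defined over $\mathbb Q$ for suitable quasismooth members, or at least over a finite extension, which is all potential density needs.

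\emph{Group 2.} Families No.~1 and No.~2 and the sextic double solids (No.~3). Quartics are handled by \cite{Harris-T}. Family No.~3 is handled by Theorem~\ref{thm:mainn}: we exhibit an explicit smooth sextic over $\mathbb Q$ containing a line and check conditions (2) and (3). The example $x_5^2=x_1^6+x_1x_2^5+x_3^6-x_4^6$ is a candidate. The quintic family No.~2 would be treated either by \cite{CP} or by an analogue of Theorem~\ref{thm:mainn}.

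\emph{Group 3.} The remaining families that \cite{CP} leaves open. For these we adapt the construction of Section~\ref{sect:construct}. We pick a family of curves on $X_d$ that pass through a singular point or lie over a special curve in the branch locus of the projection $X_d\dashrightarrow\mathbb P(a_1,\dots,a_4)$. Their normalizations should have genus one after imposing a tangency along a rational or elliptic curve $C$ contained in the relevant surface. We then pass to a double cover of the parameter surface so that the two preimages of the tangency point become sections. Finally we impose an explicit condition, analogous to the triple tangency in Theorem~\ref{thm:mainn}, forcing the difference of the two sections to be nontorsion. This lets us apply \cite{BT-elliptic} and \cite[Theorem~3.2]{Suzuki-2022}. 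For each family we write down an explicit quasismooth member over $\mathbb Q$ containing such a $C$. Quasismoothness is checked by the Jacobian criterion together with the weight conditions at the coordinate vertices.

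\emph{Main obstacle.} The hardest step is Group 3: producing, family by family, a genus-one family of curves that dominates $X_d$ and has a nontorsion bisection. Two things must hold simultaneously. First, the relevant ramification surface in the weighted projective space must contain a low-genus curve $C$ while keeping $X_d$ quasismooth and well formed. Second, the monodromy or nontorsion condition must still be verifiable. For Families No.~75 and No.~93 we expect the weights to force every candidate curve through a non-quasismooth configuration, or to give genus at least two, which is why these two are excluded. The remaining families should then be verified one at a time, by computer, on explicit equations.
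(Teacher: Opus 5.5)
There is a genuine gap. The decisive steps are deferred (``should have genus one'', ``we expect'', ``verified by computer''), and the mechanisms you do name are not the ones that work.

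First, \cite[Proposition~1.4]{CP} gives potential density for only $30$ families, not ``all but a short list''. That leaves the $60$ or so further families in which some quasismooth member is birational to an elliptic fibration. For Family No.~$60$ this already requires the special codimension-one subfamily of Remark~\ref{rema:60}, since general members are not birational to one. For these families, the elliptic structures of \cite{CP,CPR,CP-95} do not by themselves supply a nontorsion multisection with potentially dense points, so ``recording whether they do'' settles nothing. The missing idea is to specialize. Choose a quasismooth member $X_d$ over $\mathbb Q$ that contains a rational surface with potentially dense rational points. This surface becomes a multisection of the elliptic fibration, nontorsion for a sufficiently general such member, and then \cite{BT-elliptic} applies. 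In your plan these families silently fall into Group~3, where nothing is actually constructed.

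Second, your Group~3 mechanism copies the tangent-line case of Theorem~\ref{thm:mainn}. There the two sections are the two preimages of the tangency point under a double cover, and that structure is absent for Family No.~$84$. The equation of $X_{36}\subset\mathbb P(1,7,8,9,12)$ contains $x_5^3$, so $X_{36}$ is a triple cover of $\mathbb P(1,7,8,9)$. Families No.~$75$, $84$, $87$ and $93$ have no quasismooth member birational to an elliptic fibration (Theorem~\ref{theo:elliptic}). So the count $93$ rests on dominant, non-birational constructions for No.~$84$ and No.~$87$. You supply none, nor any reason why these two should succeed while $75$ and $93$ fail.

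The paper instead uses the analogue of the ``$C$ is a line'' case, not the tangency case: the K3 fibration given by projecting to $\mathbb P(1,a_i)$, followed by a base change of the rational base curve.
\begin{itemize}
\item For No.~$84$, the codimension-two condition $c_3=c_4=0$ splits the curve at infinity of the quartic K3 fiber into four lines. After a base change $K\simeq k(x_3)$ one of these lines is defined over $K$, and projecting from it gives an elliptic fibration with a triple section.
\item For No.~$87$, a degree-$5$ base change $K\simeq k(x_2)$ makes one of the five singular points on the curve at infinity of the sextic K3 fiber defined over $K$. Blowing it up gives an elliptic fibration with the exceptional $(-2)$-curve as a double section.
\end{itemize}
Finally, nothing needs to be proved about No.~$75$ and No.~$93$: the theorem only records that no construction is known for them.
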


We did not find a construction for the exceptional families in Theorem~\ref{theo:93}. 
We now sketch the proof of this theorem:
For $30$ deformation families (out of $95$), the assertion of this theorem follows from \cite[Proposition~1.4]{CP}. Many other families contain a well-formed quasismooth hypersurface $X_d$ as in \eqref{eqn:xd}, that admits a birational transformation into an elliptic fibration. Moreover, $X_d$ can be chosen to contain a rational surface with potentially dense set of rational points, which gives a multisection of the elliptic fibration, nontorsion under suitable genericity conditions. The elliptic fibration argument in the proof of Theorem~\ref{thm:mainn} implies potential density of rational points.

\begin{rema}
\label{rema:60}
General members of Families No. 3, 60, 75, 84, 87, 93 are not birational to an elliptic fibration by \cite[Theorem~1.2]{CP}. However, Family No. 60 contains special quasismooth members that are birational to elliptic fibrations. These form a codimension-$1$ subfamily. Concretely,  a 
quasismooth hypersurface in Family No. 60, 
$X_{24}\subset\mathbb P(1,4,5,6,9)$, can be given by 
$$
x_4x_5^2+x_5f_{15}(x_1,x_2,x_3,x_4)+f_{24}(x_1,x_2,x_3,x_4)=0,
$$
where $f_{15}$ and $f_{24}$ are polynomials of degree $15$ and $24$, respectively.
If $f_{15}$ does not contain $x_3^3$, or equivalently $\{x_1=x_2=x_4=0\}\subset X_{24}$,
then the projection 
\[
\begin{array}{ccc}    
X_{24} & \dashrightarrow & \mathbb P(1,4,6) \\
\left[x_1:x_2:x_3:x_4:x_5\right]& \mapsto & [x_1:x_2:x_4]
\end{array}
\]
gives a birational transformation of $X_{24}$ into an elliptic fibration.
Arguing as in \cite{CP,Cheltsov2007,CP-95}, one can show that this is essentially the only way to birationally transform $X_{24}$ into an elliptic fibration.
\end{rema}

Using birational transformations into elliptic fibrations, one can prove Theorem~\ref{theo:93} for $90$ families out of $95$. 
The remaining $5$ families are: 
\begin{equation} 
\label{eqn:fam}
\mathrm{Families \, No. \, 3,75,84,87,93}. 
\end{equation}
No quasismooth member of these families can be birationally transformed into an elliptic fibration, see Theorem~\ref{theo:elliptic} below. As mentioned above, Family No. $3$ consists of sextic double solids, which was dealt with in Theorem~\ref{thm:main}. A similar approach works for Families No.~$84$ and $87$. 
We consider these two cases in detail.

\begin{theo}[Family No. 84]
\label{thm:84}
There exists a codimension-2 subfamily of quasismooth hypersurfaces of degree $36$
$$
X_{36}\subset\mathbb{P}(1,7,8,9,12)
$$
consisting of threefolds with potentially dense sets of rational points, over any field of characteristic zero.  
\end{theo}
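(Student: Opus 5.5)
The plan is to copy the proof of Theorem~\ref{thm:mainn}, with the double cover $\pi\colon X\to\mathbb P^3$ replaced by the natural triple cover. Since $x_5^3$ is the only monomial of degree $36$ that is pure in $x_5$, a quasismooth $X_{36}$ can be written as
$$
x_5^3+x_5^2g_{12}+x_5g_{24}+g_{36}=0,\qquad g_i=g_i(x_1,x_2,x_3,x_4).
$$
Projecting away from $x_5$ gives a generically $3:1$ map $\pi\colon X_{36}\dashrightarrow\mathbb P(1,7,8,9)$, whose branch locus is a surface $S$ given by the discriminant.

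\emph{Step 1: the analogues of lines.} The role of lines is played by the weighted lines $\{x_2=a x_1^7,\ x_3=b x_1^8\}\cong\mathbb P(1,9)$. Each preimage is a curve of degree $36$ in $\mathbb P(1,9,12)$. In the chart $x_1=1$ it is a trigonal curve $y^3+\dots+z^4+\dots=0$ of bidegree $(3,4)$, so its geometric genus is generically $3$. To land in the genus-one situation, I need curves in this $2$-parameter family, or in a suitable $3$-parameter enlargement such as $x_3=bx_1^8+cx_1x_2$, that acquire two extra singularities. These come from tangency of the weighted line to $S$ at two points, or from a higher-order (cusp-type) contact at one point. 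Generically such curves form a $1$-dimensional family, which is not enough.

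\emph{Step 2: the codimension-$2$ condition.} I would impose two conditions on the coefficients of $g_{12},g_{24},g_{36}$. These should play the role of ``$S$ contains a line'': they force a distinguished curve $C\subset S$ (or $C\subset X$), with rational normalization, along which every weighted line through a point of $C$ in a fixed direction is automatically tangent to $S$ to high order. Concretely, I would require that the restriction of the equation to the curve $\{x_1=x_2=0\}$, or to a chosen rational curve, be a perfect cube in $x_5$ after a shift. Each such condition drops the genus by one for a whole $2$-parameter family. The result is a family $W\to B$ of genus-one curves over a rational surface $B$ (birational to $C\times\mathbb P^1$) together with a dominant rational map $W\dashrightarrow X_{36}$. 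I would then check, by a dimension count and a Bertini argument, that a general member of this codimension-$2$ locus is still quasismooth.

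\emph{Step 3: sections and nontorsion.} Above the tangency points the normalized curves have several marked points. After passing to a finite cover $M\to B$ of low degree, these give two sections of $Y=W\times_B M\to M$. The surface $M$ is rational after a finite extension, by Tsen's theorem exactly as before, so its rational points are potentially dense. I then need the difference of the two sections to be nontorsion on the generic fiber. I would verify this as in Theorem~\ref{thm:mainn}: exhibit one explicit member over $\mathbb Q$, specialize to a fiber, and check that the difference has infinite order there, for instance by reduction modulo two primes with incompatible torsion. Finally, \cite{BT-elliptic} (or \cite[Theorem~3.2]{Suzuki-2022}) gives potential density on $Y$, and hence on $X_{36}$.

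The main obstacle is Step 2. I have to find the right two conditions so that the genus drops from $3$ to $1$ uniformly over a $2$-dimensional rational base, while keeping $X_{36}$ quasismooth at the singular points $\tfrac17(1,8,9,12)$, $\tfrac13$-points, and so on. If this fails, the first alternative I would try is cusp-type contact of a single weighted line (a cusp alone drops the genus by one), combined with one further codimension-$1$ condition.
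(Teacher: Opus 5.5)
There is a genuine gap at your Step~2, and the framework of Steps~1--2 cannot be made to work.

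\emph{Why the proposed family fails.} Your weighted lines $\{x_2=ax_1^7,\ x_3=bx_1^8\}$ are exactly the fibres of the projection $\mathbb{P}(1,7,8,9)\dashrightarrow\mathbb{P}(1,7,8)$, so exactly one of them passes through a general point. The proposed three-parameter enlargement adds nothing: on $x_2=ax_1^7$ one has $x_1x_2=ax_1^8$, so $x_3=bx_1^8+cx_1x_2$ is just $x_3=(b+ac)x_1^8$. Several consequences follow:
\begin{itemize}
\item There is no analogue of $B_C=\mathbb{P}(T_S|_C)$ from Theorem~\ref{thm:mainn}, since there are no ``directions'' to choose at a point of $C$.
\item The weighted lines meeting a curve $C$ form only a one-parameter family.
\item The only two-dimensional subfamily dominating $X_{36}$ is the whole family, i.e.\ the rational fibration $X_{36}\dashrightarrow\mathbb{P}(1,7,8)$ with its trigonal genus-$3$ fibres.
\end{itemize}
If some codimension-two condition made these fibres have geometric genus one, the quasismooth $X_{36}$ would be birational to an elliptic fibration. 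That contradicts Theorem~\ref{theo:elliptic}, and the same objection rules out your cusp-type fallback. What you need is a family of genus-one curves with \emph{several} members through a general point, so that the map to $X_{36}$ is dominant but not birational.

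\emph{The missing idea.} The paper gets such a family by fixing different variables.
\begin{itemize}
\item Assume the coefficient $c_{13}$ of $x_2^4x_3$ is nonzero, and change $x_3$ to remove $x_1x_2^5$. Then in the chart $x_1=1$ the fibres of $(x_1:x_3)\colon X_{36}\dashrightarrow\mathbb{P}(1,8)$ are affine quartic surfaces in $(x_2,x_4,x_5)$.
\item Among the monomials of degree four in $(x_2,x_4,x_5)$, the only one involving $x_3$ is $x_2^4x_3$.
\item The codimension-two condition is the vanishing of the coefficients $c_3,c_4$ of $x_1^3x_2^3x_5$ and $x_1x_2^2x_4x_5$. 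Then the curve at infinity of the closure $S_4\subset\mathbb{P}^3$ is $c_6x_4^4+c_7x_2x_4^3+c_8x_2^2x_4^2+c_9x_2^3x_4+(c_{12}+c_{13}x_3)x_2^4=0$, i.e.\ four concurrent lines $x_4=tx_2$.
\item Because $x_3$ enters linearly, $x_3$ is a polynomial in $t$. So after a rational base change of degree $4$, one line $L$ is defined over the base.
\item The planes through $L$ cut out the curves $\{x_3=x_3(t)x_1^8,\ x_4=tx_1^2x_2+\lambda x_1^9\}\cap X_{36}$. These are plane cubics in $(x_2,x_5)$, and $L$ is a trisection.
\end{itemize}
This is exactly the mechanism you were after. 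It is a three-parameter family of curves (plane sections of $S_4$) whose genus drops from $3$ to $1$ on a two-parameter subfamily, which is the true analogue of the tangency condition. It gives a genus-one fibration over the rational $(t,\lambda)$-plane that maps $4:1$ onto $X_{36}$. Your Step~3 (nontorsion for a general member, then \cite{BT-elliptic}) would then finish the argument as in the paper.
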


\begin{proof}
The hypersurface $X_{36}$ is given by 
\begin{multline*}
c_0x_5^3+x_5^2f_{12}(x_1,x_3)+c_1x_1^5x_2x_5^2+c_2x_1^3x_4x_5^2\\
+x_5f_{24}(x_1,x_3)+x_2x_5f_{17}(x_1,x_3)+x_2^2x_5f_{10}(x_1,x_3)+c_3x_1^3x_2^3x_5\\
+x_5x_4f_{15}(x_1,x_3)+x_2x_4x_5f_8(x_1,x_3)+c_4x_1x_2^2x_4x_5+c_5x_1^6x_5x_4^2\\
+c_6x_4^4+x_4^3f_9(x_1,x_3)+c_7x_1^2x_2x_4^3\\
+x_4^2f_{18}(x_1,x_3)+x_2x_4^2f_{11}(x_1,x_3)+c_8x_1^4x_2^2x_4^2\\
+x_4f_{27}(x_1,x_3)+x_2x_4f_{20}(x_1,x_3)+x_2^2x_4f_{13}(x_1,x_3)+c_9x_1^6x_2^3x_4\\
+f_{36}(x_1,x_3)+x_2f_{29}(x_1,x_3)+x_2^2f_{22}(x_1,x_3)\\
+c_{10}x_1^{15}x_2^3+c_{11}x_1^7x_2^3x_3+c_{12}x_1^8x_2^4+c_{13}x_2^4x_3+c_{14}x_1x_2^5=0,
\end{multline*}
where $c_0,\ldots,c_{14}$ are constants, and each $f_d(x_1,x_3)$ is a polynomial of degree $d$.
Assuming that $c_{13}\ne 0$, we may change coordinate $x_3$ to make $c_{14}=0$.
Then, the affine chart of $X_{36}$ given by $x_1=1$, is given by a quartic polynomial in $x_2,x_4,x_5$ whose coefficients are polynomials in $k[x_3]$. Hence, the generic fiber of the projection $X_{36}\dasharrow\mathbb{P}(1_{x_1},8_{x_3})$ is a quartic surface in $\mathbb{A}^3_{x_2,x_4,x_5}$ defined over $k(x_3)$. Compactifying this surface to the quartic surface $S_4\subset\mathbb{P}^3_{u,x_2,x_4,x_5}$, we see that the curve at infinity 
$$
C_{\infty}=S_4\cap \{u=0\}
$$ is given by
$$
u=x_2^2(c_3x_2+c_4x_4)x_5+c_6x_4^4+c_7x_2x_4^3+c_8x_2^2x_4^2+c_9x_2^3x_4+(c_{12}+c_{13}x_3)x_2^4=0.
$$
If we assume that $c_3=0$ and $c_4=0$ (codimension-2 condition) then 
$C_{\infty}$ is geometrically reducible and consists of four lines, over a finite extension of $k(x_3)$. Moreover, there exists a finite extension $K/k(x_3)$ such that $K\simeq k(x_3)$, and one of these four lines $L$ is defined over $K$. Projecting from $L$, we obtain an elliptic fibration $S_4\to\mathbb{P}^1$ defined over $K$, such that $L$ is a triple section of this fibration. 

Geometrically, this gives an elliptic fibration $\psi\colon Y\to Z$ over a rational surface, and a dominant rational map $Y\dasharrow X_{36}$ such that $\psi$ has a rational multisection of degree $3$. Imposing a generality conditions on $X_{36}$ guarantees that the multisection is nontorsion, we obtain the assertion as in \cite{BT-elliptic}.
\end{proof}

\begin{exam}
\label{exam:84}
The hypersurface $X_{36}\subset\mathbb P(1,7,8,9,12)$ given by
$$
x_1^{29}x_2+x_2^4x_3+x_3^3x_5+x_5^3+x_4^4=0.
$$
is a quasismooth member of Family No.~$84$. Arguing as in the proof of Theorem~\ref{thm:84}, we see that $X_{36}(\mathbb{Q})$ is Zariski dense. Indeed, in the chart $x_1=1$, make the substitutions 
$$
x_3=-s^4, \quad x_4=sx_2-\frac{u^3}{4},
$$
to obtain an affine $Y\subset\mathbb{A}^4$ and  a dominant rational map  $Y\dasharrow X_{36}$. 
Let $Y_{\mathbb{Q}(s,u)}\subset\mathbb{P}^2$ be the natural completion of the generic fiber of the projection to $\mathbb{A}^2_{s,u}$; it is  a smooth cubic curve, containing the points: 
\begin{align*}
O&=[su:1:0],\\
P&=[64u^4:u^{12}+64s^{12}u^4:512s^{13}u+8su^9-512].
\end{align*}
Their difference $P-O$ is nontorsion. It follows that  $Y(\mathbb{Q})$ is Zariski dense, by \cite{BT-elliptic}, thus  $X_{36}(\mathbb{Q})$ is dense as well.
\end{exam}

\begin{theo}[Family No. 87]
\label{thm:87}
Let $X_{40}\subset \mathbb P(1,5,7,8,20)$ be
a general quasismooth hypersurface of degree $40$. Then 
$X$ has a potentially dense set of rational points, over any field of characteristic zero. 
\end{theo}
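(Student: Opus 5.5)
We follow the template of Theorem~\ref{thm:84}: fibre $X_{40}$ over a weighted projective line, identify the generic fibre as a surface containing a curve at infinity with an easily accessible line or point, and use it to produce an elliptic fibration with a nontorsion multisection over a rational base.

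First we write the general quasismooth $X_{40}\subset\mathbb P(1,5,7,8,20)$ in coordinates $x_1,x_2,x_3,x_4,x_5$ of weights $1,5,7,8,20$. Quasismoothness forces the monomials $x_5^2$, $x_4^5$ and $x_2^8$ to appear. For $x_3$ (weight $7$) the monomials of degree $40$ of the form $x_3^a\cdot(\text{one other variable})$ are $x_3^5x_2$ ($35+5$), and there is none with $x_4$ since $40-8=32$ is not divisible by $7$. Completing the square in $x_5$ gives
$$
x_5^2=F_{40}(x_1,x_2,x_3,x_4).
$$
We then work in the affine chart $x_1=1$ and consider the projection to $\mathbb P(1,5)_{x_1,x_2}$, whose generic fibre is a surface over $k(x_2)$. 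Weighting by $x_3,x_4$, the surface is a double cover $x_5^2=F(x_3,x_4)$ of the affine plane $\mathbb A^2_{x_3,x_4}$. Since $x_3,x_4$ have weights $7,8$, the leading part of $F$ is governed by the monomials $x_4^5$ and $x_3^5x_2$, i.e.\ by $x_4^5+c\,x_2x_3^5$. This is a quintic in $(x_3,x_4)$ with coefficients in $k(x_2)$.

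The fibre is therefore birational to a double plane branched in a plane curve of degree about $5$ or $6$, compactified suitably; it is a K3 or del Pezzo-type surface. We will try the alternative projection to $\mathbb P(1,7)_{x_1,x_3}$ as well and pick whichever makes the fibre a del Pezzo surface of degree $1$ or $2$, or an elliptic K3. On such a fibre we look for a point or line at infinity defined over a finite extension $K/k(x_2)$ that is again a rational function field. For instance, the fibre may be a double cover of $\mathbb P^2$ branched in a sextic acquiring a singular point $q$ at infinity whose coordinates lie in $k(x_2)\bigl(\sqrt[5]{\cdot}\bigr)\simeq k(t)$. After a base change $x_2=t^5$, as in Example~\ref{exam:84}, the point $q$ becomes rational. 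Projecting from $q$, the pencil of lines through $q$ then gives a genus-one fibration of the fibre over $\mathbb P^1_K$, exactly as in the line case of Theorem~\ref{thm:mainn}. Geometrically this yields an elliptic threefold $Y\to Z$ over a rational surface $Z$ with a dominant rational map $Y\dashrightarrow X_{40}$. The two preimages of $q$, or the curve at infinity, furnish a multisection, which becomes two sections after a further quadratic base change of $Z$ that keeps the base rational.

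The final step invokes \cite{BT-elliptic} (or \cite[Theorem~3.2]{Suzuki-2022}). We need the base to have potentially dense points, which holds since it is rational. We also need the difference of the two sections to be nontorsion on the generic fibre. Because $X_{40}$ is general, it suffices to check nontorsion on one explicit specialization: exhibit the two points on a specific Weierstrass cubic and compute that their difference has nonzero canonical height, or reduce modulo two primes to bound the torsion. This check is the main obstacle.

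The other delicate point is the geometric one: making sure the point $q$ really exists for a general member, with no codimension condition, unlike Family No.~84. We would first test this by computing the curve at infinity of the fibre for general coefficients and checking that it is a single totally ramified point, the analogue of the order-$5$ contact in Example~\ref{exam:char5}. That contact is forced by the weights, since the leading part is $x_4^5+c\,x_2x_3^5$.
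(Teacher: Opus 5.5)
Your overall route is the paper's: chart $x_1=1$, fibre over $x_2$, a double plane branched in a quintic, a singular point $q$ at infinity over a rational degree-$5$ extension $K/k(x_2)$, the pencil of lines through $q$, the double section over $q$, then \cite{BT-elliptic}. However, the step that makes this work for a \emph{general} member is wrong as you state it.

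In the chart $x_1=1$, the part of $F$ of degree $5$ in $(x_3,x_4)$ is not $x_4^5+c\,x_2x_3^5$. The monomials $x_1^{5-b}x_3^{5-b}x_4^b$, $0\le b\le 4$, also have degree $40$. So for general $X_{40}$ the quintic at infinity is
$$
(c\,x_2+a_0)x_3^5+a_1x_3^4x_4+a_2x_3^3x_4^2+a_3x_3^2x_4^3+a_4x_3x_4^4+a_5x_4^5,\qquad a_i\in k,
$$
with five distinct roots. It is not a single point of order-$5$ contact; if it were, $q$ would already be defined over $k(x_2)$. The base change $x_2=s^5$ also does not split it. After that base change, a point at infinity is defined over the function field of the plane quintic $c\,s^5+a_0+a_1t+\dots+a_5t^5=0$. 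For general $a_i$ this curve is smooth of genus $6$, so the base of your elliptic fibration would not have potentially dense rational points. Your Kummer picture is exactly the special member of Example~\ref{exam:87}, where $a_0=\dots=a_4=0$.

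The missing observation is that $x_2$ enters the quintic at infinity only linearly, through $x_2x_3^5$. This monomial is forced by quasismoothness at $P_{x_3}$, as you noted, so $c\neq 0$. Adjoining the slope $t=x_4/x_3$ of one point at infinity therefore gives $x_2=-(a_0+a_1t+\dots+a_5t^5)/c$, i.e.\ $K=k(x_2)(t)=k(t)$, with no condition on the coefficients. This is why no codimension condition is needed, in contrast to Family No.~84.

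You should also pin down the geometry instead of hedging between K3 and del Pezzo fibres and alternative projections. Since the quintic has odd degree, the line $u=0$ at infinity belongs to the branch locus. The compactified fibre is then the double plane $x_5^2=u\,F^h(x_3,x_4,u)$ in $\mathbb P(1,1,1,3)$. The point $q$ is one of the five $A_1$-points on $\{u=x_5=0\}$, lying over the transverse intersections of the line with the quintic. Lines through $q$ meet the branch sextic at $q$ twice and at four further points, so they give genus-one curves. The exceptional $(-2)$-curve is the double section.

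With $K=k(t)$ in place of $x_2=t^5$, the rest of your plan goes through as in the paper. That is, a quadratic base change splits the double section while keeping the base rational, and nontorsion is checked via a specialization.
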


\begin{proof}
Taking the affine chart $x_1=1$, projecting $X_{40}$ to $x_2$, and taking the generic fiber of this projection, we obtain a double cover of $\mathbb{A}^2_{x_3,x_4}$ ramified in a quintic curve defined over $k(x_2)$. The natural compactification of this surface is a sextic surface $S_6\subset\mathbb{P}(1_{x_3},1_{x_4},1_u,3_{x_5})$, containing the curve $\{u=x_5=0\}$. This curve contains five singular points of $S_6$, but none of them is defined over $k(x_2)$. Consider a degree $5$ extension of fields $K/k(x_2)$ such that $K\simeq k(x_2)$, and one of these singular points is defined over $K$. As in \cite{BT-K3}, blowing up $S_6$ at this point, we obtain an elliptic fibration over $\mathbb{P}^1$ with exceptional $(-2)$-curve as a double section. Arguing as in the proof of Theorem~\ref{thm:84}, we get the required assertion.
\end{proof}



\begin{exam}
\label{exam:87}
Let $X_{40}\subset\mathbb P(1,5,7,8,20)$ be the hypersurface given by
$$
x_5^{2}-x_4^{5}+x_2x_3^{5}+x_2^{8}+x_1^{40}+x_1^{33}x_3=0,
$$
it is quasismooth. 
In the affine chart $x_1=1$, substitute 
$$
x_2=s^5, \quad x_4=sx_3+5u^2.
$$
The resulting threefold $Y\subset\mathbb{A}^4$ is given by 
$$
x_5^2=(sx_3+5u^2)^5-s^5x_3^5-s^{40}-1-x_3,
$$
the corresponding rational map $Y\dasharrow X_{40}$ is dominant and has generic degree $10$.
Let $Y_{\mathbb{Q}(s,u)}\subset\mathbb{P}(1,1,2)$ be the natural completion of the generic fiber of the projection to $\mathbb{A}^2_{s,u}$. Then $Y_{\mathbb{Q}(s,u)}$ is a smooth elliptic curve, with points 
\begin{align*}
O&=[1:0:5s^2u],\\
P&=[1:0:-5s^2u]
\end{align*}
whose difference $P-O$ is nontorsion. By \cite{BT-elliptic}, 
$Y(\mathbb{Q})$ is Zariski dense, and thus $X_{40}(\mathbb{Q})$ as well.
\end{exam}

As mentioned in Remark~\ref{rema:60},  general members of Families in \eqref{eqn:fam} are not birational to an elliptic fibration. We conclude the paper by generalizing this result to {\em every} quasismooth member in these families.  A similar assertion for Family No. 3 (sextic double solids) is proved in \cite{CP2010,CP-2007}; but it fails for Family No. 60, by Remark~\ref{rema:60}.

\begin{theo}[{cf. \cite{Ryder06,Ryder,CP,Cheltsov2007,CheltsovPark-Halphen,CK}}]
\label{theo:elliptic}
Let 
$$
X_d\subset\mathbb P(a_1,a_2,a_3,a_4,a_5)
$$ 
be a quasismooth member of one of Families No. 75, 84, 87, 93.
Then $X_d$ is not birational to an elliptic or an abelian surface 
fibration.
\end{theo}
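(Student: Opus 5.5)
The plan is the standard Noether--Fano method for excluding non-trivial structures of elliptic or abelian-surface fibrations, in the form used in \cite{Ryder06,Ryder,CP,Cheltsov2007}. Suppose there is a birational map $\chi\colon X_d\dashrightarrow V$, where $\nu\colon V\to Z$ is an elliptic fibration ($\dim Z=2$) or an abelian surface fibration ($\dim Z=1$). Let $H$ be the pullback to $V$ of a very ample divisor on $Z$, and let $\mathcal M$ be the mobile linear system $\chi^{-1}_*|H|$ on $X_d$. Then $\mathcal M\sim_{\mathbb Q} -nK_{X_d}$ for some $n\in\mathbb Q_{>0}$, because $\mathrm{Pic}(X_d)=\mathbb Z$. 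The general fibers of $\nu$ have trivial canonical class, so the log pair $(X_d,\tfrac1n\mathcal M)$ cannot be canonical. Indeed, if it were canonical, $K_{X_d}+\tfrac1n\mathcal M\sim_{\mathbb Q}0$ would force the Kodaira dimension of $K_V+\tfrac1n H$ to be zero, which contradicts the fact that $H$ is pulled back from the base. More precisely, this is the criterion of \cite[Theorem~1.2]{CP} and \cite{Ryder06}: either $(X_d,\tfrac1n\mathcal M)$ is not canonical, or it is canonical but not terminal, with a center whose crepant blowup leads to the fibration. Both must be ruled out, so the first step is to quote this criterion precisely.

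The second step is to locate the centers of non-canonical (resp. non-terminal) singularities of $(X_d,\tfrac1n\mathcal M)$. The birational rigidity results \cite{CPR,CP-95} already exclude smooth points and curves as maximal centers for \emph{every} quasismooth member, and they classify which singular points $\tfrac1r(1,a,r-a)$ can be centers. The sharper bound needed here, excluding non-terminal centers too, follows from the Kawamata blowup computations. For the four families in the statement, the weights are
$$\mathbb P(1,4,5,6,15),\ \mathbb P(1,7,8,9,12),\ \mathbb P(1,5,7,8,20),\ \mathbb P(1,7,8,10,25).$$
For each singular point $P$ I will take its Kawamata blowup $f\colon Y\to X_d$ with exceptional divisor $E$ and write $f^*\mathcal M=\mathcal M_Y+mE$. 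If $m<n/r$ there is nothing to do. If $m\ge n/r$, I will use $\mathcal M_Y^2\cdot(-K_Y)\ge0$ or $\mathcal M_Y^2\cdot T\ge 0$ for a suitable nef-on-curves divisor $T$, as in \cite{CP}. In the untwisted cases, $(-K_Y)^2\cdot(\text{mobile})\le 0$ gives $m=n/r$ exactly and shows that $\mathcal M_Y$ is composed of fibers of a map given by $-K_Y$. That map is the projection to $\mathbb P(1,a_i,a_j)$, and its general fiber is not elliptic. In the twisted cases, the Sarkisov link is a birational involution, and I must show that it does not produce a new fibration.

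The main obstacle is that the statement covers \emph{every} quasismooth member, not just general ones. Special members may have singular points where the local equation degenerates; for example, a coordinate vertex may lie on $X_d$, or the tangent-cone term may be missing. This is exactly what happens in Family No.~60 in Remark~\ref{rema:60}, where the fibration appears. So for each of the four families I will enumerate the quasismooth degenerations allowed by the monomial structure, in the way the equation was written out in the proof of Theorem~\ref{thm:84}. For each such degeneration I will check that the curve $\{x_1=x_{i}=0\}\cap X_d$ through a nonterminal center is never a complete intersection with nonpositive self-intersection on $Y$. The point is that the configuration of Remark~\ref{rema:60}, where a line such as $\{x_1=x_2=x_4=0\}$ lies in $X_d$, cannot occur for these weights: quasismoothness forces monomials like $x_5^2x_i$ or pure powers to appear, which prevents the fibration. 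This case-by-case check is where I expect most of the work. Finally, smooth points are excluded by the $4n^2$-inequality \cite{CP-95}, since $(-K_{X_d})^3=d/\prod a_i$ is tiny.
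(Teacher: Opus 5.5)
The outer frame of your proposal matches the paper's: the mobile system $\mathcal M\sim nA$, non-terminality of $(X,\frac1n\mathcal M)$, the $4n^2$-inequality at smooth points, and Kawamata blowups at singular points. However, the step that should produce the contradiction at a singular point $p$ fails. In all four families every singular point $\frac1r(1,a,r-a)$ has $B^3=A^3-\frac{1}{ra(r-a)}<0$ for $B=-K_Y$ on the Kawamata blowup. For example, $B^3=-\frac1{15}$ at the $\frac14(1,1,3)$ point of No.~75 and $-\frac1{84}$ at the $\frac18(1,7,1)$ point of No.~84. Consequences:
\begin{itemize}
\item $-K_Y$ is not nef, so the inequality $\mathcal M_Y^2\cdot(-K_Y)\ge0$ is not available.
\item No multiple of $-K_Y$ defines a morphism to $\mathbb P(1,a_i,a_j)$. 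Indeed $|-K_Y|$ has a single member, since $h^0(X,A)=1$.
\item When $B$ is nef with $B^3=0$, that projection is exactly how elliptic fibrations arise (cf.\ Remark~\ref{rema:60}). So ``its general fiber is not elliptic'' is the wrong target.
\end{itemize}

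The missing idea is a negative extremal ray. Canonicity of the pair (\cite[Theorem~1.1.10]{CP-95}), non-terminality at $p$, and Kawamata's theorem give $\mathcal M_Y\sim_{\mathbb Q}nB$ exactly. Take $S$ to be the strict transform of $\{x_1=0\}$ and a suitable coordinate surface $D\sim_{\mathbb Q}cB+eE$. Then $\Gamma=S\cdot D$ is irreducible, satisfies $D\cdot\Gamma\le0$, and spans the second ray of $\overline{\operatorname{NE}}(Y)$ (\cite[\S5.2, Lemma~3.2.2]{CP-95}).
\begin{itemize}
\item If $e>0$, then $[\Gamma]=c[B\cdot S]+e[E\cdot S]$ with both terms effective, which contradicts extremality.
\item If $e=0$, then $B^2\equiv\frac1c\Gamma$ is effective, so no nef test class $T$ can give $T\cdot\mathcal M_Y^2<0$. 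Instead, $M_1\cdot M_2\equiv\frac{n^2}{c}\Gamma$ with $B\cdot\Gamma=cB^3<0$ rules out a system not composed of a pencil \cite[Lemma~2.1]{CP}. A pencil must equal $|cB|$ by \cite[Theorem~2.2]{CheltsovPark-Halphen}, and its members are K3 surfaces.
\item The two points where $\Gamma$ is not of this kind (No.~75 at $\frac14(1,1,3)$, No.~87 at $\frac15(1,2,3)$) are handled by showing the pencil $|D|$ is nef \cite[Lemma~3.2.6]{CP-95} and applying \cite[Corollary~2.2]{CP}.
\end{itemize}
Your proposed check that $\{x_1=x_i=0\}$ never has nonpositive intersection on $Y$ goes the wrong way: $D\cdot\Gamma\le0$ holds, and it is exactly what the argument needs.

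There are further holes.
\begin{itemize}
\item Rigidity only excludes curves as non-canonical centers. You must also exclude them as canonical non-terminal centers, and neither Kawamata blowups nor the $4n^2$-inequality does this. The paper combines $A\cdot C\le A^3$ for such a curve \cite[Lemma~2.1]{Ryder} with \cite[Lemma~2.2(a)]{Ryder}, using finiteness of $\pi_{x_5}$ and $d<a_2a_5$.
\item Your plan has no argument for the abelian-surface case. There $\mathcal M$ is a pencil and $M_1\cdot M_2$ sits in the base locus, so intersection inequalities alone cannot conclude; the Halphen-pencil identification above is what handles it.
\item At smooth points the relevant quantity is $bA^3<4$ with $b=\operatorname{lcm}(a_2,a_3,a_4)$, not merely the smallness of $A^3$.
\item Your opening claim that the pair ``cannot be canonical'' is false. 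The correct statement is non-terminality \cite[Proposition~2.8]{CP}, and canonical non-terminal pairs are precisely where such fibrations come from. Since canonicity is given directly, no untwisting by involutions is needed.
\item Special members do need separate treatment, but in the paper only at the $\frac17$ points of Nos.~84 and 93 when the coefficient of $x_2^4x_3$ (resp.\ $x_2^6x_3$) vanishes. In that case $S\sim_{\mathbb Q}B-E$, and a different $D$ is used inside the same extremal-ray argument.
\end{itemize}
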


\begin{proof}
We may assume that $k$ is algebraically closed. Recall that:
$$
\begin{array}{c|c}
\text{Family No. 75}&X_{30}\subset\mathbb P(1,4,5,6,15)\\
\text{Family No. 84}&X_{36}\subset\mathbb P(1,7,8,9,12)\\
\text{Family No. 87}&X_{40}\subset\mathbb P(1,5,7,8,20)\\
\text{Family No. 93}&X_{50}\subset\mathbb P(1,7,8,10,25).
\end{array}
$$
Put $X=X_d$ and $A=-K_X$. 

Suppose that there exists a birational map $\phi\colon V\dasharrow X$ such that there exists a fibration $\psi\colon V\to B$ whose general fiber is an elliptic curve or an abelian surface. We let $\mathcal{M}_V=|\psi^*(H)|$ for a very ample divisor on $B$, we let $\mathcal{M}_X$ be the strict transform of the linear system $\mathcal{M}_V$ on $X$ via the map $\phi^{-1}$. If $\psi$ is an elliptic fibration, then $\mathcal{M}_X$ is not composed from a pencil. Vice versa, if $\psi$ is an abelian fibration, then $B=\mathbb{P}^1$ and we may take $H=\mathcal{O}_{B}(1)$, so that $\mathcal{M}_X$ is actually a pencil. In both cases, let $n\in\mathbb{N}$ such that
$$
\mathcal{M}_X\sim nA.
$$ 
Then $(X,\frac{1}{n}\mathcal{M}_X)$ is not terminal by \cite[Proposition~2.8]{CP}, and this log pair is canonical by \cite[Theorem~1.1.10]{CP-95}. We seek a contradiction.

The natural projection $\pi_{x_5}\colon X\to\mathbb P(1,a_2,a_3,a_4)$ is a finite morphism. 
Thus, if an irreducible curve $C$ were a center of non-terminal singularities of  $(X,\frac{1}{n}\mathcal{M}_X)$, then \cite[Lemma~2.1]{Ryder} would give
$$
A\cdot C\leqslant A^3,
$$ 
contradicting \cite[Lemma~2.2(a)]{Ryder}, because $d<a_2a_5$ and $\pi_{x_5}$ is finite. Thus,  $(X,\frac{1}{n}\mathcal{M}_X)$ has terminal singularities outside of finitely many points.

Let $p\in X$ be a point such that $(X,\frac{1}{n}\mathcal{M}_X)$ is not terminal at $p$. Put 
$$
b=\operatorname{lcm}(a_2,a_3,a_4).
$$
Then the  linear system $|\pi_{x_5}^*(\mathcal O_{\mathbb P(1,a_2,a_3,a_4)}(b))|$ is base point free, and its linear subsystem consisting of surfaces passing through $p$ does not have base curves. Computing, we obtain:
$$
\begin{array}{c|rrr}
\text{Family No.}&A^3&b&bA^3\\ \hline
75&1/60&60&1\\
84&1/168&504&3\\
87&1/140&280&2\\
93&1/280&280&1
\end{array}
$$

If $p\in X$ were a smooth point, then Pukhlikov's $4n^2$-inequality would give a contradiction, as in \cite[\S2.1]{CP-95}. Hence, $p$ is singular, in fact,  
a quotient singularity of type $\frac{1}{r}(1,a,r-a)$, where the possible values of $a$ and $r$ can be found in \cite{CPR,CP,CP-95} and in the table below. As in these papers, let $f:Y\to X$ be its Kawamata blowup, with exceptional divisor $E$, let $B=-K_Y$, and let $\mathcal{M}_Y$ be the strict transform of $\mathcal{M}_X$ on  $Y$.
Then $\mathcal M_Y\sim_{\mathbb Q}nB$, by Kawamata's theorem \cite{Kawamata}.

Let $S$ be the strict transform of $\{x_1=0\}\cap X$. Then $S$ is irreducible,
and $S\sim_{\mathbb{Q}} B$ or $S\sim_{\mathbb{Q}}B-E$, see the table below. Since $\mathcal M_Y$ is mobile, $B\cdot S$ is represented by an effective cycle. We will also choose an additional surface $D_X\subset X$, and put $D$ for its strict transform on $Y$. Then 
$$
D\sim_{\mathbb{Q}} cB+eE \sim_{\mathbb{Q}} cf^*(A)-\frac mrE, 
$$
for some $c>0$, $e\geqslant 0$, and $m>0$. The table below describes $D_X$,
as well as $c$ and $e$. In the last column, $\alpha$ is the coefficient of the monomial $x_2^4x_3$ ($x_2^6x_3$, respectively) in the defining equation of $X$ for Family~No. 84 (Family~No. 93, respectively). In the column labeled $\bigstar$, Yes stands for a technical condition, which will be explained later. 

{\tiny
\setlength{\tabcolsep}{7.5pt}
\begin{longtable}{@{}c l c c c l c l@{}}
\caption{Families 75, 84, 87 and 93.}\label{tab:families}\\
\toprule
Family No. & Point $p$ & $B^3$ & $S$ & $D$ & $D_X$ & $\bigstar$ & Condition\\
\midrule
\endfirsthead
\multicolumn{8}{@{}l}{\emph{Table~\ref{tab:families} (continued)}}\\
\toprule
Family No. & Point $p$ & $B^3$ & $S$ & $T$ & $T_X$ & $\bigstar$ & Condition\\
\midrule
\endhead
\midrule
\multicolumn{8}{r@{}}{\emph{continued on next page}}\\
\endfoot
\bottomrule
\endlastfoot
\multirow{4}{*}{75}
  & $\qs{4}{1,1,3}$             & $-\tfrac{1}{15}$    & $B$   & $5B+E$  & $\langle x_1x_2,\,x_3\rangle$  & No & \\*
  & $\qs{3}{1,1,2}$          & $-\tfrac{3}{20}$    & $B$   & $5B+E$  & $x_3=0$                    & Yes & \\*
  & $\qs{5}{1,4,1}$  & $-\tfrac{1}{30}$    & $B$   & $4B$    & $x_2=0$                    & Yes & \\*
  & $\qs{2}{1,1,1}$  & $-\tfrac{29}{60}$   & $B$   & $5B+2E$ & $x_3=0$                    & Yes & \\
\midrule
\multirow{5}{*}{84}
  & $\qs{8}{1,7,1}$             & $-\tfrac{1}{84}$    & $B$   & $7B$    & $x_2=0$                    & Yes & \\*
  & $\qs{7}{1,2,5}$             & $-\tfrac{1}{120}$   & $B$   & $8B$    & $x_3=0$                    & Yes & $\alpha\neq0$\\*
  & $\qs{7}{1,2,5}$             & $-\tfrac{1}{120}$   & $B-E$ & $12B+E$ & $x_5=0$                    & Yes & $\alpha=0$\\*
  & $\qs{3}{1,1,2}$          & $-\tfrac{9}{56}$    & $B$   & $8B+2E$ & $x_3=0$                    & Yes & \\*
  & $\qs{4}{1,3,1}$          & $-\tfrac{13}{168}$  & $B$   & $7B+E$  & $x_2=0$                    & Yes & \\
\midrule
\multirow{3}{*}{87}
  & $\qs{7}{1,1,6}$             & $-\tfrac{1}{60}$    & $B$   & $5B$    & $x_2=0$                    & Yes & \\*
  & $\qs{4}{1,1,3}$          & $-\tfrac{8}{105}$   & $B$   & $7B+E$  & $x_3=0$                    & Yes & \\*
  & $\qs{5}{1,2,3}$  & $-\tfrac{11}{420}$  & $B$   & $7B+E$  & $\langle x_1^2x_2,\,x_3\rangle$& No & \\
\midrule
\multirow{5}{*}{93}
  & $\qs{8}{1,7,1}$             & $-\tfrac{1}{70}$    & $B$   & $7B$    & $x_2=0$                    & Yes & \\*
  & $\qs{7}{1,3,4}$             & $-\tfrac{1}{120}$   & $B$   & $8B$    & $x_3=0$                    & Yes & $\alpha\neq0$\\*
  & $\qs{7}{1,3,4}$             & $-\tfrac{1}{120}$   & $B-E$ & $10B+E$ & $x_4=0$                    & Yes & $\alpha=0$\\*
  & $\qs{5}{1,2,3}$          & $-\tfrac{5}{168}$   & $B$   & $8B+E$  & $x_3=0$                    & Yes & \\*
  & $\qs{2}{1,1,1}$          & $-\tfrac{139}{280}$ & $B$   & $7B+3E$ & $x_2=0$                    & Yes & \\
\end{longtable}
}

Whenever we have 
Yes in the $\bigstar$ column, it follows from \cite[\S~5.2]{CP-95} that
the effective one-cycle $\Gamma=S\cdot D$ has irreducible support, no component in $E$,
and satisfies $D\cdot\Gamma\leqslant 0$. In all these cases, the proof of \cite[Lemma~3.2.2]{CP-95} implies that $\Gamma$ generates the extremal ray of $\overline{\operatorname{NE}}(Y)$ that is different from the
ray contracted by $f$. Thus, if $\bigstar$ holds and $e>0$, the identity
$$
 [\Gamma]=c[B\cdot S]+e[E\cdot S]
$$
contradicts extremality, because the cycles $B\cdot S$ and $E\cdot S$ are effective.
If $e=0$, then $S\sim_{\mathbb{Q}} B$ and $D\sim_{\mathbb{Q}} cB$, so that 
$$
\Gamma\equiv cB^2,
$$
which gives $B\cdot\Gamma=cB^3<0$. If $\mathcal M_Y$ is not composed with a pencil, we obtain a contradiction using \cite[Lemma~2.1]{CP}, which excludes the elliptic fibration case. If $\mathcal M_Y$ is a pencil, then \cite[Theorem~2.2]{CheltsovPark-Halphen} implies that $\mathcal{M}_Y=|cB|$, i.e., $\mathcal{M}_Y$ is the pencil generated by $cS$ and $T$. However, the normalization of a general surface in $|cB|$ is birational to a K3 surface, which excludes the abelian surface fibration case.

In the remaining two cases (which do not satisfy condition $\bigstar$), $|D|$ is a pencil, and its base curves are strict transforms of 
\begin{align*}
C&=\{x_1=x_3=0\}\cap X,\\ 
C'&=\{x_2=x_3=0\}\cap X. 
\end{align*}
Note that $p\not\in C'$. Moreover, the curve $C$ is irreducible, and its strict transform on $Y$ is numerically equivalent to $B\cdot D$. Then $D$ is nef by \cite[Lemma~3.2.6]{CP-95}. We obtain a contradiction using \cite[Corollary~2.2]{CP}.
\end{proof}

	\bibliography{sextic-double}

@incollection{CPR,
 author = {Corti, A. and Pukhlikov, A. and Reid, M.},
 title = {Fano 3-fold hypersurfaces},
 booktitle = {Explicit birational geometry of 3-folds},
 isbn = {0-521-63641-8},
 pages = {175--258},
 year = {2000},
 publisher = {Cambridge: Cambridge University Press},
 language = {English},
 zbMATH = {1513901},
 Zbl = {0960.14020}
}

@article{Ryder06,
 author = {Ryder, D.},
 title = {Classification of elliptic and {{\(K3\)}} fibrations birational to some {{\(\mathbb Q\)}}-{Fano} 3-folds},
 fjournal = {Journal of Mathematical Sciences. University of Tokyo},
 journal = {J. Math. Sci., Tokyo},
 issn = {1340-5705},
 volume = {13},
 number = {1},
 pages = {13--42},
 year = {2006},
 language = {English},
 zbMATH = {5140672},
 Zbl = {1139.14013}
}

@article{CheltsovPark-Halphen,
 author = {Cheltsov, I. and Park, J.},
 title = {Halphen pencils on weighted {Fano} threefold hypersurfaces},
 fjournal = {Central European Journal of Mathematics},
 journal = {Cent. Eur. J. Math.},
 issn = {1895-1074},
 volume = {7},
 number = {1},
 pages = {1--45},
 year = {2009},
 language = {English},
 doi = {10.2478/s11533-008-0056-2},
 zbMATH = {5624213},
 Zbl = {1183.14022}
}

@article{Cheltsov2007,
 author = {Cheltsov, I.},
 title = {Elliptic structures on weighted three-dimensional {Fano} hypersurfaces},
 fjournal = {Izvestiya: Mathematics},
 journal = {Izv. Math.},
 issn = {1064-5632},
 volume = {71},
 number = {4},
 pages = {765--862},
 year = {2007},
 language = {English},
 doi = {10.1070/IM2007v071n04ABEH002375},
 zbMATH = {5231350},
 Zbl = {1135.14031}
}

@article{Ryder,
 author = {Ryder, D.},
 title = {The curve exclusion theorem for elliptic and {{\(K3\)}} fibrations birational to {Fano} 3-fold hypersurfaces},
 fjournal = {Proceedings of the Edinburgh Mathematical Society. Series II},
 journal = {Proc. Edinb. Math. Soc., II. Ser.},
 issn = {0013-0915},
 volume = {52},
 number = {1},
 pages = {189--210},
 year = {2009},
 language = {English},
 doi = {10.1017/S0013091506000770},
 zbMATH = {5522641},
 Zbl = {1162.14008}
}

@incollection{Fletcher,
 author = {Iano-Fletcher, A. R.},
 title = {Working with weighted complete intersections},
 booktitle = {Explicit birational geometry of 3-folds},
 isbn = {0-521-63641-8},
 pages = {101--173},
 year = {2000},
 publisher = {Cambridge: Cambridge University Press},
 language = {English},
 zbMATH = {1513900},
 Zbl = {0960.14027}
}

@misc{Reid,
 author = {Reid, M.},
 title = {Canonical 3-folds},
 year = {1980},
 language = {English},
 howpublished = {Journees de geometrie algebrique, {Angers}/{France} 1979, 273-310 (1980).},
 zbMATH = {3704739},
 Zbl = {0451.14014}
}

@article {BT-elliptic,
    AUTHOR = {Bogomolov, F. A. and Tschinkel, Yu.},
     TITLE = {On the density of rational points on elliptic fibrations},
   JOURNAL = {J. Reine Angew. Math.},
  FJOURNAL = {Journal f\"ur die Reine und Angewandte Mathematik. [Crelle's
              Journal]},
    VOLUME = {511},
      YEAR = {1999},
     PAGES = {87--93},
      ISSN = {0075-4102,1435-5345},
   MRCLASS = {14G05 (11G35 14J45)},
  MRNUMBER = {1695791},
MRREVIEWER = {Constantin\ D.\ Manoil},
       DOI = {10.1515/crll.1999.511.87},
       URL = {https://doi.org/10.1515/crll.1999.511.87},
}

@article{CK,
 author = {Cheltsov, I. and Karzhemanov, I.},
 title = {Halphen pencils on quartic threefolds},
 fjournal = {Advances in Mathematics},
 journal = {Adv. Math.},
 issn = {0001-8708},
 volume = {223},
 number = {2},
 pages = {594--618},
 year = {2010},
 language = {English},
 doi = {10.1016/j.aim.2009.08.020},
 url = {hdl.handle.net/20.500.11820/c0160fa6-e1f2-489c-a2d4-83995b12ec24},
 zbMATH = {5653479},
 Zbl = {1207.14013}
}

@incollection{Kawamata,
 author = {Kawamata, Yu.},
 title = {Divisorial contractions to 3-dimensional terminal quotient singularities},
 booktitle = {Higher dimensional complex varieties. Proceedings of the international conference, Trento, Italy, June 15--24, 1994},
 isbn = {3-11-014503-0},
 pages = {241--246},
 year = {1996},
 publisher = {Berlin: Walter de Gruyter},
 language = {English},
 zbMATH = {1054919},
 Zbl = {0894.14019}
}

@book{CP-95,
 author = {Cheltsov, I. and Park, J.},
 title = {Birationally rigid {Fano} threefold hypersurfaces},
 fseries = {Memoirs of the American Mathematical Society},
 series = {Mem. Am. Math. Soc.},
 issn = {0065-9266},
 volume = {1167},
 isbn = {978-1-4704-2316-2; 978-1-4704-3643-8},
 year = {2017},
 publisher = {Providence, RI: American Mathematical Society (AMS)},
 language = {English},
 doi = {10.1090/memo/1167},
 zbMATH = {6752116},
 Zbl = {1383.14004}
}

@article {B-Sho,
    AUTHOR = {Bogomolov, F. and Halle, L. H. and Pazuki, F.
              and Tanimoto, S.},
     TITLE = {Abelian {C}alabi-{Y}au threefolds: {N}\'eron models and
              rational points},
   JOURNAL = {Math. Res. Lett.},
  FJOURNAL = {Mathematical Research Letters},
    VOLUME = {25},
      YEAR = {2018},
    NUMBER = {2},
     PAGES = {367--392},
      ISSN = {1073-2780,1945-001X},
   MRCLASS = {14J32 (11G05 14D06 14G05)},
  MRNUMBER = {3826826},
MRREVIEWER = {Tyler\ L.\ Kelly},
       DOI = {10.4310/MRL.2018.v25.n2.a2},
       URL = {https://doi.org/10.4310/MRL.2018.v25.n2.a2},
}

@article {BT-K3,
    AUTHOR = {Bogomolov, F. A. and Tschinkel, Yu.},
     TITLE = {Density of rational points on elliptic {$K3$} surfaces},
   JOURNAL = {Asian J. Math.},
  FJOURNAL = {Asian Journal of Mathematics},
    VOLUME = {4},
      YEAR = {2000},
    NUMBER = {2},
     PAGES = {351--368},
      ISSN = {1093-6106,1945-0036},
   MRCLASS = {14G05 (11G05 11G35 14C22 14J10 14J28)},
  MRNUMBER = {1797587},
MRREVIEWER = {Matthew\ H.\ Baker},
       DOI = {10.4310/AJM.2000.v4.n2.a6},
       URL = {https://doi.org/10.4310/AJM.2000.v4.n2.a6},
}

@article {Campana,
    AUTHOR = {Campana, F.},
     TITLE = {Orbifolds, special varieties and classification theory},
   JOURNAL = {Ann. Inst. Fourier (Grenoble)},
  FJOURNAL = {Universit\'e{} de Grenoble. Annales de l'Institut Fourier},
    VOLUME = {54},
      YEAR = {2004},
    NUMBER = {3},
     PAGES = {499--630},
      ISSN = {0373-0956,1777-5310},
   MRCLASS = {14E05 (14D06 14J40 32Q57 35Q15)},
  MRNUMBER = {2097416},
MRREVIEWER = {Dan\ Abramovich},
       DOI = {10.5802/aif.2027},
       URL = {https://doi.org/10.5802/aif.2027},
}

@article {AP-Fano,
    AUTHOR = {Cheltsov, I. and Park, J.},
     TITLE = {Birationally rigid {F}ano threefold hypersurfaces},
   JOURNAL = {Mem. Amer. Math. Soc.},
  FJOURNAL = {Memoirs of the American Mathematical Society},
    VOLUME = {246},
      YEAR = {2017},
    NUMBER = {1167},
     PAGES = {v+117},
      ISSN = {0065-9266,1947-6221},
      ISBN = {978-1-4704-2316-2; 978-1-4704-3643-8},
   MRCLASS = {14E08 (14E07 14J30 14J45)},
  MRNUMBER = {3605615},
MRREVIEWER = {Constantin\ Shramov},
       DOI = {10.1090/memo/1167},
       URL = {https://doi.org/10.1090/memo/1167},
}

@incollection{CP2010,
 author = {Cheltsov, I. and Park, J.},
 title = {Sextic double solids},
 booktitle = {Cohomological and geometric approaches to rationality problems. New Perspectives},
 isbn = {978-0-8176-4933-3; 978-0-8176-4934-0},
 pages = {75--132},
 year = {2010},
 publisher = {Boston, MA: Birkh{\"a}user},
 language = {English},
 doi = {10.1007/978-0-8176-4934-0_4},
 zbMATH = {5800254},
 Zbl = {1200.14031}
}

@article{CP-2007,
 author = {Cheltsov, I. and Park, J.},
 title = {Two remarks on sextic double solids},
 fjournal = {Journal of Number Theory},
 journal = {J. Number Theory},
 issn = {0022-314X},
 volume = {122},
 number = {1},
 pages = {1--12},
 year = {2007},
 language = {English},
 doi = {10.1016/j.jnt.2006.03.001},
 zbMATH = {5124502},
 Zbl = {1111.14010}
}

@article {CP,
    AUTHOR = {Cheltsov, I. and Park, J.},
     TITLE = {Weighted {F}ano threefold hypersurfaces},
   JOURNAL = {J. Reine Angew. Math.},
  FJOURNAL = {Journal f\"ur die Reine und Angewandte Mathematik. [Crelle's
              Journal]},
    VOLUME = {600},
      YEAR = {2006},
     PAGES = {81--116},
      ISSN = {0075-4102,1435-5345},
   MRCLASS = {14E05 (14E07 14G05 14J45)},
  MRNUMBER = {2283799},
MRREVIEWER = {Daniel\ J.\ Ryder},
       DOI = {10.1515/CRELLE.2006.087},
       URL = {https://doi.org/10.1515/CRELLE.2006.087},
}

@article {DS,
    AUTHOR = {de Jong, A. J. and Starr, J.},
     TITLE = {Every rationally connected variety over the function field of
              a curve has a rational point},
   JOURNAL = {Amer. J. Math.},
  FJOURNAL = {American Journal of Mathematics},
    VOLUME = {125},
      YEAR = {2003},
    NUMBER = {3},
     PAGES = {567--580},
      ISSN = {0002-9327,1080-6377},
   MRCLASS = {14D06 (14G27)},
  MRNUMBER = {1981034},
MRREVIEWER = {M.\ Kh.\ Gizatullin},
       URL =
              {http://muse.jhu.edu/journals/american_journal_of_mathematics/v125/125.3jong.pdf},
}

@article {GHS,
    AUTHOR = {Graber, T. and Harris, J. and Starr, J.},
     TITLE = {Families of rationally connected varieties},
   JOURNAL = {J. Amer. Math. Soc.},
  FJOURNAL = {Journal of the American Mathematical Society},
    VOLUME = {16},
      YEAR = {2003},
    NUMBER = {1},
     PAGES = {57--67},
      ISSN = {0894-0347,1088-6834},
   MRCLASS = {14M20 (14D05 14D22)},
  MRNUMBER = {1937199},
MRREVIEWER = {Alexandr\ V.\ Pukhlikov},
       DOI = {10.1090/S0894-0347-02-00402-2},
       URL = {https://doi.org/10.1090/S0894-0347-02-00402-2},
}

@article {Harris-T,
    AUTHOR = {Harris, J. and Tschinkel, Yu.},
     TITLE = {Rational points on quartics},
   JOURNAL = {Duke Math. J.},
  FJOURNAL = {Duke Mathematical Journal},
    VOLUME = {104},
      YEAR = {2000},
    NUMBER = {3},
     PAGES = {477--500},
      ISSN = {0012-7094,1547-7398},
   MRCLASS = {14G05 (11G35 14G25 14J20)},
  MRNUMBER = {1781480},
       DOI = {10.1215/S0012-7094-00-10436-X},
       URL = {https://doi.org/10.1215/S0012-7094-00-10436-X},
}

@article {HT-symm,
    AUTHOR = {Hassett, B. and Tschinkel, Yu.},
     TITLE = {Abelian fibrations and rational points on symmetric products},
   JOURNAL = {Internat. J. Math.},
  FJOURNAL = {International Journal of Mathematics},
    VOLUME = {11},
      YEAR = {2000},
    NUMBER = {9},
     PAGES = {1163--1176},
      ISSN = {0129-167X,1793-6519},
   MRCLASS = {14D06 (11G35 14D05 14J28 14K15)},
  MRNUMBER = {1809306},
MRREVIEWER = {Tam\'as\ Szamuely},
       DOI = {10.1142/S0129167X00000544},
       URL = {https://doi.org/10.1142/S0129167X00000544},
}

@misc{masa,
      title={Unirational {F}ano threefolds}, 
      author={Massarenti, A.},
      year={2026},
      note={{\tt arXiv:2608.22333}},
}

@incollection{ACP-2021,
  author = {Abban, H. and Cheltsov, I. and Park, J.},
  title = {On geometry of {Fano} threefold hypersurfaces},
  booktitle = {Rationality of Varieties},
  series = {Progress in Mathematics},
  volume = {342},
  pages = {1--14},
  publisher = {Birkh{\"a}user},
  year = {2021},
  doi = {10.1007/978-3-030-75421-1_1},
  eprint = {2007.14213},
  archivePrefix = {arXiv}
}

@article{Suzuki-2022,
  author = {Suzuki, F.},
  title = {Higher-dimensional {Calabi--Yau} varieties with dense sets of rational points},
  journal = {European Journal of Mathematics},
  volume = {8},
  number = {1},
  pages = {193--204},
  year = {2022},
  doi = {10.1007/s40879-021-00506-4}
}
	\bibliographystyle{alpha}
	
\end{document}